\renewcommand\section{\@startsection{section}{1}{\z@}%
                                  {-3.5ex \@plus -1ex \@minus -.2ex}%
                                  {2.3ex \@plus.2ex}%
                                  {\normalfont\large\bfseries}}
\begin{document}

\title{Generalized domination structure in cubic graphs}

\author{Misa Nakanishi \thanks{E-mail address : nakanishi@2004.jukuin.keio.ac.jp}}
\date{}
\maketitle

\begin{abstract}
In this paper, we consider generalized domination structure in graphs,
which stipulates the structure of a minimum dominating set. Two cycles of length 0 mod 3 intersecting with one path are the constituents of
the domination structure and by taking every three vertices on the cycles we can obtain a minimum dominating set. For a cubic graph, we
construct generalized domination structure by adding edges in a certain way. We prove
that the minimum dominating set of a cubic graph is determined in
polynomial time. \\
MSC : 05C69
\end{abstract}

\newtheorem{thm}{Theorem}[section]
\newtheorem{lem}{Lemma}[section]
\newtheorem{prop}{Proposition}[section]
\newtheorem{cor}{Corollary}[section]
\newtheorem{rem}{Remark}[section]
\newtheorem{conj}{Conjecture}[section]
\newtheorem{claim}{Claim}[section]
\newtheorem{fact}{Fact}[section]
\newtheorem{obs}{Observation}[section]

\newtheorem{defn}{Definition}[section]
\newtheorem{propa}{Proposition}
\renewcommand{\thepropa}{\Alph{propa}}
\newtheorem{conja}[propa]{Conjecture}

\section{Notation}
In this paper, a graph $G$ is finite, undirected, and simple with the vertex set $V$ and edge set $E$. We follow \cite{Diestel} for basic notation. 
For a vertex $v \in V(G)$, the open neighborhood, denoted by $N_G(v)$, is $\{ u \in V(G) \colon\ uv \in E(G) \}$, and the closed neighborhood, denoted by $N_G[v]$, is $N_G(v) \cup \{v\}$, also for a set $W \subseteq V(G)$, let $N_G(W) = \bigcup_{v \in W} N_G(v)$ and $N_G[W] = N_G(W) \cup W$. A {\it dominating set} $X \subseteq V(G)$ is such that $N_G[X] = V(G)$. For a set $S \subseteq V (G)$, as is clear from the context, $S$ denotes $G[S]$. A minimum dominating set, called a {\it d-set}, is a dominating set of minimum cardinality.  Two cycles $C_1$ and $C_2$ are said to be {\it connecting without seams} if $C_1 \cap C_2$ is one path. For a graph $G$, {\it structure} $H$ is the union of maximal number of cycles of length 0 mod 3 in $G$ where each cycle of length 0 mod 3 is connecting without seams for some other cycle of length 0 mod 3, or one cycle of length 0 mod 3 in $H$, in addition, if $V(G - H) = \emptyset$, we call this structure {\it domination structure}. Let $\mathcal{F}(G)$ be the set of all structures in a graph $G$. 

\section{Generalized domination structure in cubic graphs}

We consider a connected graph $G$, otherwise consider each component one by one. We introduce the construction scheme ${\bf K}$ as follows. \\

\noindent ${\bf K}$: Input a connected graph $G$. \\
(1) Let $G_0 = G$ and $k = 0$. \\
(2) Let $v$ be a cut vertex of $G_k$.
For every pair of components $C_1$ and $C_2$ of $G_k - v$ and for every pair of vertices $v_1 \in C_1 \cap N_{G_k}(v)$ and $v_2 \in C_2 \cap N_{G_k}(v)$, add an edge $v_1v_2$. Increment $k$. \\
(3) Let $D_2$ be an induced cycle of length 2 mod 3 in $G_k$. Take a vertex $w \in V(D_2)$, and set $N_{D_2}(w) = \{w_1, w_2\}$. Now, add an edge $w_1w_2$. Set $w_1ww_2\alpha \subseteq D_2$. Now, add two edges $w_1\alpha$ and $w\alpha$. Increment $k$. \\
(4) Let $D_1$ be an induced cycle of length 1 mod 3 in $G_k$. Take a vertex $x \in V(D_1)$, and set $N_{D_1}(x) = \{x_1, x_2\}$. Now, add an edge $x_1x_2$. Set $x_1xx_2\alpha \subseteq D_1$. Now, add an edge $x\alpha$. Increment $k$. \\
(5) Repeat (2)-(4). \\
(6) Return the resulting graph $G_{k}$. \\

The next remark is a basic concept of the following proofs.

\begin{rem}\label{R}
Let $X$ be a dominating set of $G$. Every subset $D \subseteq X$ is a d-set of $N_G[D]$ if and only if $X$ is a d-set of $G$. 
\end{rem}

Let ${\bf K}(G)$ be a graph constructed by applying ${\bf K}$ to $G$. Note that ${\bf K}(G)$ is not unique and constructed from $G$ arbitrarily. 

\begin{prop}\label{P1}
${\bf K}(G)$ is domination structure. Moreover, $|\mathcal{F}({\bf K}(G))| = 1$.
\end{prop}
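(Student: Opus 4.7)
The plan is to verify three assertions about ${\bf K}(G)$: every vertex lies on some cycle of length $\equiv 0 \pmod 3$; the family of all such cycles assembles into one union connecting without seams; and this union is the only structure. I would begin by proving that the procedure ${\bf K}$ terminates. As a potential I would take the lexicographic pair consisting of the number of cut vertices of $G_k$ and the number of induced cycles of length $\not\equiv 0\pmod 3$ in $G_k$. Step (2) strictly drops the first coordinate: after the step, the former cut vertex $v$ cannot separate any pair of vertices on distinct sides because a direct edge now joins them. Steps (3) and (4) each kill one induced cycle of length $2$ or $1$ mod $3$ by installing chords, so the second coordinate strictly decreases provided the added edges do not create new induced cycles of the wrong length; establishing this proviso is the main obstacle and is handled by a case analysis of the local neighborhood after the step.

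Next I would analyze what is produced locally by each step. In step (3) the vertices $\{w,w_1,w_2,\alpha\}$ receive all six pairwise edges and so induce a $K_4$, whose four triangles pairwise share an edge; the remaining vertices of $D_2$ together with $w_1,\alpha$ close into the cycle $C^*$ of length $|D_2|-2\equiv 0\pmod 3$ via the new chord $w_1\alpha$, and $C^*$ meets the $K_4$ in the single edge $w_1\alpha$. A careful enumeration shows that any induced cycle spanning both $C^*$ and the $K_4$ must use the edge $w_1\alpha$ and therefore has length $\equiv 0\pmod 3$. In step (4), the two triangles $xx_1x_2$ and $xx_2\alpha$ share the edge $xx_2$, and the residual cycle of length $|D_1|-1\equiv 0\pmod 3$ meets each triangle in a single edge. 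In both cases the local modification is itself a domination structure in the sense of the paper, and it is glued to the structure already present through the path along which the step was applied.

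Finally, for the two outstanding claims: every vertex is covered because once ${\bf K}$ halts the graph has no cut vertex and no induced cycle of length $\not\equiv 0\pmod 3$, so every vertex lies on some induced (hence $\equiv 0\pmod 3$) cycle; and these cycles glue together because each ${\bf K}$-step attaches its new cycles along a path shared with the cycles already present, so induction on the number of steps produces a single union $H$ of $0\pmod 3$ cycles pairwise connecting without seams, with $V({\bf K}(G)\setminus H)=\emptyset$. Uniqueness $|\mathcal{F}({\bf K}(G))|=1$ then follows from the maximality clause in the definition of structure, since any other candidate family would itself be connecting without seams to $H$ and hence would be absorbed into $H$ rather than yielding a distinct structure. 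The technically hardest piece is certifying that steps (3)-(4) never spawn new induced cycles of length $\not\equiv 0\pmod 3$, because the added chords can in principle combine with long excursions through the rest of the graph; this must be checked by tracking how the residual degrees modulo $3$ interact with the seam edges.
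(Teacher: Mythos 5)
Your overall route is the paper's route—argue that ${\bf K}(G)$ ends up $2$-connected with every induced cycle of length $0 \bmod 3$, and then assemble those cycles into a single structure—but the load-bearing step of your proposal is misplaced and, as stated, does not go through. You make termination rest on a lexicographic potential whose second coordinate is the number of induced cycles of length $\not\equiv 0 \pmod 3$, and you concede that its decrease requires that steps (3) and (4) ``do not create new induced cycles of the wrong length.'' That proviso is not something a local case analysis can certify: the chords $w_1w_2$, $w_1\alpha$, $w\alpha$, $x_1x_2$, $x\alpha$ can close new induced cycles through arbitrarily long excursions in the rest of the graph, and nothing constrains their residues mod $3$. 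So the step you yourself flag as ``the technically hardest piece'' is in fact false in general, and your potential need not decrease. The gap is real, but it is also unnecessary: each applicable step of ${\bf K}$ adds at least one edge not previously present (in step (2) because $v_1,v_2$ lie in different components of $G_k - v$; in steps (3) and (4) because $D_2$ and $D_1$ are induced), so the loop in step (5) runs at most $\binom{|V(G)|}{2}$ times, and when it halts \emph{no} step is applicable—hence no cut vertex and no induced cycle of length $1$ or $2$ mod $3$ survives by the stopping condition alone, with no need to control what intermediate iterations create. The paper takes exactly this for granted and then invokes an ear decomposition of the $2$-connected terminal graph to extract the $0 \bmod 3$ induced cycles connecting without seams one by one, where your proposal instead glues local structures by induction on the ${\bf K}$-steps; that inductive gluing inherits the same unprovable ``no new bad cycles'' hypothesis, so you should replace it by an argument about the terminal graph itself (as the paper does), after which your maximality argument for $|\mathcal{F}({\bf K}(G))| = 1$ is fine and matches the paper's intent.
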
 

\begin{proof}
From the rules, ${\bf K}(G)$ is 2-connected. Hence ${\bf K}(G)$ has an ear decomposition. Since all induced cycles are of length 0 mod 3, the domination structure $H$ is obtained by finding 0 mod 3 induced cycles connecting without seams one by one, so that $H = {\bf K}(G)$. We had the claim.
\end{proof}

Suppose that for all $v \in V(G)$, $d_G(v) \geq 3$. 

\begin{fact}\label{p1}
For domination structure $H = {\bf K}(G)$, label every three vertices on the induced cycles that constitute $H$ in order of connecting without seams. Note that a certain induced cycle of $H$ is not counted for the labeling, and may have no labels, where the vertices of the induced cycle are all in other induced cycles of $H$. There exist at most $|V(G)|$ cases of labeling. (i) For every labeling, the set of all labeled vertices is a dominating set of ${\bf K}(G)$. (ii) For at least one labeling, the set of all labeled vertices is a d-set of ${\bf K}(G)$.
\end{fact}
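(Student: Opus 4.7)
I would attack the three assertions of the Fact in the order stated, using Proposition \ref{P1} to treat $\mathbf{K}(G)$ as a domination structure whose constituent cycles all have length $0 \bmod 3$ and are joined along shared paths. First I would bound the number of labelings: once a starting vertex and direction are chosen, the ``every three vertices, in order of connecting without seams'' rule is deterministic, since at each shared path the counting carries over from one cycle to the next. As the starting vertex ranges over $V(\mathbf{K}(G)) = V(G)$, there are at most $|V(G)|$ possible labelings.

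For (i), on any single cycle $C$ of length $3k$ the every-third-vertex pattern produces $k$ labels that dominate $V(C)$, because each unlabeled vertex of $C$ is adjacent on $C$ to a labeled one. The prescribed traversal order guarantees that on a path shared by two cycles the patterns agree from both sides, so no conflict arises at the junction. Because $V(G - H) = \emptyset$, every vertex of $\mathbf{K}(G)$ lies on some cycle of $H$; the exceptional cycles that receive no label of their own (those whose vertices all belong to other cycles of $H$) are still dominated by the labels contributed from those other cycles. Taking the union of the labeled sets over all cycles therefore yields a dominating set of $\mathbf{K}(G)$.

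For (ii), I would proceed by induction on the number of cycles composing $H$. In the base case a single cycle of length $3k$ has domination number exactly $k$, realized by any offset. In the inductive step a new cycle $C'$ of length $3k'$ is attached to the already-labeled subgraph along a shared path $P$; I would continue the every-third pattern through $P$ and around $C'$, and verify that the number of new labels equals the increment in the domination number caused by the attachment. Invoking Remark \ref{R}, which allows optimality to be checked locally on closed neighborhoods of labeled vertices, this constructs a labeling whose total size equals $\gamma(\mathbf{K}(G))$, i.e.\ a d-set.

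\textbf{Expected main obstacle.} The technical heart is the inductive step for (ii): I must show that for at least one global starting offset the every-third pattern lines up on every shared path so that no vertex is wasted at a junction, and in particular that the cycles which receive no labels of their own do not cause a deficit elsewhere. This will require a case analysis on the length of each shared path modulo $3$ and on the relative offsets of the two adjoining cycles across $P$. The availability of up to $|V(G)|$ distinct starting offsets is precisely what should furnish enough flexibility to hit the minimum; the delicate point is verifying that this flexibility suffices under every configuration permitted by the connecting-without-seams topology of $H$.
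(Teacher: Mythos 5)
Your outline reproduces the paper's proof skeleton almost exactly: the bound of $|V(G)|$ labelings comes from the observation that a labeling is fixed by its starting vertex (the paper invokes $d_G(v)\ge 3$ for uniqueness and does not mention a choice of direction, which in your version would give $2|V(G)|$ rather than $|V(G)|$), statement (i) is handled by the every-third-vertex pattern on each cycle of length $0 \bmod 3$, and statement (ii) is reduced to Remark \ref{R}. So in approach you are aligned with the paper, which simply declares (i) ``obvious'' and states that (ii) ``follows from Remark \ref{R}.''

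The genuine gap is the one you yourself flag and then do not close: the behaviour of the every-third pattern at the shared paths. For (i) you assert that ``the prescribed traversal order guarantees that on a path shared by two cycles the patterns agree from both sides, so no conflict arises at the junction,'' but nothing in the definition of the labeling forces the labels inherited on a shared path to sit at positions of the correct residue modulo $3$ around the second cycle; if they do not, the second cycle is not dominated by its own labels, and the argument for (i) breaks, not only the argument for (ii). For (ii) you explicitly defer the case analysis on the length of each shared path modulo $3$ and on the relative offsets, and you never verify that one of the at most $|V(G)|$ starting choices makes every junction align simultaneously: a single starting vertex fixes the offset globally, so when several junctions impose possibly incompatible residue constraints there is no a priori reason that any one offset satisfies all of them. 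Remark \ref{R} only lets you pass between local minimality on closed neighborhoods and global minimality; it does not produce the aligned labeling whose existence is the content of (ii). Until that alignment argument is supplied, neither (i) nor (ii) is proved --- though, to be fair, the paper's own two-sentence proof does not supply it either.
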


\begin{proof}
By the first vertex choice for the labeling, all labeled vertices are uniquely determined in $V(G)$ since for all $v \in V(G)$, $d_G(v) \geq 3$, and so there exist at most $|V(G)|$ cases of labeling. The statement (i) is obvious. The statement (ii) follows from Remark \ref{R}. 
\end{proof}

Let $Y$ be a d-set of ${\bf K}(G)$ that is obtained by applying Fact \ref{p1}. Let $\mathcal{Y}$ be the set of all $Y$. Let $X$ be a d-set of $G$. Let $\mathcal{X}$ be the set of all $X$. 

\begin{prop}\label{P2}
For some $X \in \mathcal{X}$, and some $Y \in \mathcal{Y}$, $Y \subseteq X$.
\end{prop}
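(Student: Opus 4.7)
My plan is to take an arbitrary d-set $X \in \mathcal{X}$ of $G$ and to extract from it a labeling $Y \in \mathcal{Y}$ satisfying $Y \subseteq X$, relying on Remark \ref{R} and the structural uniqueness provided by Proposition \ref{P1}. The first observation is that $V({\bf K}(G)) = V(G)$ and $E(G) \subseteq E({\bf K}(G))$, so the d-set $X$ automatically dominates ${\bf K}(G)$ as well (though not necessarily minimally). By Proposition \ref{P1}, ${\bf K}(G)$ is the unique domination structure $H$, made of induced cycles of length $0 \bmod 3$ connecting without seams.

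The next step is to locate an every-third-vertex sub-pattern of $X$ on each cycle of $H$. For each cycle $C$ in $H$ of length $3k_C$, I apply Remark \ref{R} to the subset $D := X \cap V(C)$: since $X$ is a d-set of $G$, $D$ is a d-set of $N_G[D]$ in $G$. Combined with the fact that $X$ as a whole dominates $C$ inside ${\bf K}(G)$ and that a cycle of length $3k_C$ admits minimum dominating sets only of the "every third vertex" shape, this pins $D$ down to (a subset of) such a pattern on $C$.

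The final step is to stitch these local patterns into a single globally consistent labeling. Adjacent cycles of $H$ connect without seams, meaning they share a path, so the pattern of $X$ on one cycle dictates the pattern on its neighbor along the shared path. Iterating across $H$ — which is connected because ${\bf K}(G)$ is 2-connected by Proposition \ref{P1} — one obtains a labeling indexed (as in Fact \ref{p1}) by a single starting vertex. Choosing the starting vertex inside $X$ and using the degree-$\geq 3$ hypothesis (which makes the labeling uniquely determined by this choice, per Fact \ref{p1}) yields a $Y \subseteq X$; Fact \ref{p1}(ii) then guarantees that at least one such starting vertex makes $Y \in \mathcal{Y}$.

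The main obstacle I anticipate is the consistency of the local every-third-vertex patterns across cycles of $H$ that share a path: a priori, the pattern of $X$ on one cycle could be shifted relative to its pattern on an adjacent cycle, which would break the global labeling. I expect to resolve this by a further application of Remark \ref{R} to subsets $D \subseteq X$ that straddle shared paths, forcing any such shift to vanish, possibly packaged as an induction on the number of cycles of $H$ or on the steps of the construction ${\bf K}$.
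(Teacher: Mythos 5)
Your proposal runs in the opposite direction from the paper's proof, and the direction you chose creates a gap that your plan does not close. You start from an \emph{arbitrary} d-set $X$ of $G$ and try to extract a labeling $Y \subseteq X$; this is a strictly stronger claim than the proposition, which only asserts the existence of \emph{some} compatible pair $(X,Y)$. The paper instead fixes a labeling $Y \in \mathcal{Y}$ first and extends it upward: it takes a minimum set $W \subseteq V(G) \setminus Y$ with $Y \cup W$ dominating $G$, and argues (via Remark \ref{R} and the fact that $E(G[Y]) = \emptyset$, so $Y \cup W$ is a minimal dominating set) that $Y \cup W$ is itself a d-set of $G$, giving $X = Y \cup W \supseteq Y$. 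Nothing in the paper's argument requires every $X$ to contain a labeling, and choosing the easier existential direction is precisely what makes the paper's proof go through.

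The concrete gap in your version is the claim that $D = X \cap V(C)$ is pinned down to an every-third-vertex pattern on each cycle $C$ of the structure $H$. Remark \ref{R} only tells you that $D$ is a d-set of $N_G[D]$, which is a statement about the closed neighborhood of $D$, not about $C$: vertices of $C$ may be dominated in $G$ by vertices of $X$ lying entirely outside $C$ (in a cubic graph every vertex of $C$ can have a $G$-neighbor off the cycle, and $C$ may even use edges of ${\bf K}(G) \setminus G$). So $X \cap V(C)$ need not dominate $C$ at all, need not have cardinality $|C|/3$, and need not be periodic; your hedge that it is ``(a subset of)'' such a pattern does not help, because completing a proper subset to a full pattern produces a $Y$ that is no longer contained in $X$. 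The subsequent stitching step inherits this problem and adds another: even granting periodicity on each cycle, you acknowledge that the phases on adjacent cycles could disagree, and you offer only the hope that a ``further application of Remark \ref{R}'' removes the shift, without an argument. As it stands the proposal is a plan for a stronger and likely false statement, not a proof of Proposition \ref{P2}.
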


\begin{proof}
For some $Y \in \mathcal{Y}$, if $Y$ is a dominating set of $G$, then for some $X \in \mathcal{X}$, $X = Y$. Otherwise, for all $Y \in \mathcal{Y}$, $Y$ is not a dominating set of $G$. Now, for some $v \in Y$, and some $w \in V(G) \setminus Y$, $vw$ is an added edge for ${\bf K}(G)$ and $N_G(w) \subseteq V(G) \setminus Y$. Now, we consider such $v$ and $w$. Let $Z' = \{ w \in V(G) \setminus Y \colon\ v \in Y, vw \in E({\bf K}(G)) \setminus E(G), N_{G}(w) \subseteq V(G) \setminus Y\}$ and $E' = \{ vw \in E({\bf K}(G)) \setminus E(G) \colon\ w \in V(G) \setminus Y, v \in Y, N_{G}(w) \subseteq V(G) \setminus Y\}$. 
Let $E''(vw)$ be an induced cycle of length 0 mod
3 in ${\bf K}(G)$ such that for $vw \in E'$, $vw \in E''(vw)$ holds.
Let $\mathcal{E}(vw)$ be the set of all $E''(vw)$ for $vw \in E'$ and let $\mathcal{E} = \bigcup_{vw \in E'}\mathcal{E}(vw)$. Let $J$ be the union of all induced cycles of length 0
mod 3 in ${\bf K}(G)$ other than the induced cycles in $\mathcal{E}$. By the definition of $Y$ and Remark \ref{R}, $J \cap Y$ is a d-set of $N_G[J \cap Y]$, and $Y$ is a d-set of $G - Z' = N_G[Y]$.  
Let $W$ be a subset of $V(G) \setminus Y$ of minimum cardinality such that $Y \cup W$ is a dominating set of $G$. By the definition of $W$, $W$ is a d-set of $N_G[W]$. Since for all $x \in V(G)$, $d_G(x) \geq 3$, and by Fact \ref{p1}, $E(G[Y]) = \emptyset$, and $Y \cup W$ is a minimal dominating set of $G$. Therefore, by Remark \ref{R}, for some $X \in \mathcal{X}$, $X = Y \cup W$.
\end{proof}

Suppose that $Y$ is a d-set of ${\bf K}(G)$ that satisfies Proposition \ref{P2}.

\begin{fact}\label{P4}
Let $G'$ be constructed by deleting $Y$, and for every pair $w_1, w_2 \in \bigcup_{y \in Y}N_G(y)$, adding an edge $w_1w_2$ to $G$. Let $Z_1$ be a d-set of $G'$. Let $G'' = G - N_G[Y]$ and $Z_2$ be a d-set of $G''$. If $|Z_1| < |Z_2|$, then $Y \cup Z_1$ is a d-set of $G$, and $Z_1 \cap \bigcup_{y \in Y}N_G(y) \ne \emptyset$. If $|Z_1| \geq |Z_2|$, then $Y \cup Z_2$ is a d-set of $G$. 
\end{fact}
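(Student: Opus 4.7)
My plan is first to show that $Y \cup Z_1$ and $Y \cup Z_2$ are both dominating sets of $G$, and then, using Proposition~\ref{P2}, to compare their sizes against a d-set of $G$ that contains $Y$. The key local observation is that every added edge of $G'$ has both endpoints inside $N_G(Y) \setminus Y$, while every vertex deleted in $G''$ lies in $N_G[Y]$; consequently, a vertex $v \in V(G) \setminus N_G[Y]$ has exactly the same neighbourhood in $G$, $G'$ and $G''$. Combined with the fact that $Y$ already dominates $N_G[Y]$, this immediately gives that $Y \cup Z_1$ and $Y \cup Z_2$ dominate $G$: the only vertices that need $Z_i$ to help are those in $V(G) \setminus N_G[Y]$, and on this set the neighbourhoods coincide.

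For the minimality part, I apply Proposition~\ref{P2} to fix a d-set $X = Y \cup W$ of $G$ with $W \cap Y = \emptyset$, and I split on whether $W$ meets $N_G(Y)$. If $W \cap N_G(Y) = \emptyset$, then $W \subseteq V(G) \setminus N_G[Y]$, and by the same-neighbourhood observation above, $W$ dominates $V(G) \setminus N_G[Y]$ in $G''$, so $|W| \ge |Z_2|$. If instead $W \cap N_G(Y) \ne \emptyset$, then any witness vertex of $W$ in $N_G(Y) \setminus Y$ is joined in $G'$ to every other vertex of $N_G(Y) \setminus Y$ through the clique added by step~(2) of $\mathbf{K}$; meanwhile $W$ still covers $V(G) \setminus N_G[Y]$ in $G'$ because all edges between $V(G) \setminus N_G[Y]$ and the rest of $V(G) \setminus Y$ survive. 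So $W$ is a dominating set of $G'$, and $|W| \ge |Z_1|$.

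Now I combine the two cases. When $|Z_1| \ge |Z_2|$, both cases give $|W| \ge |Z_2|$, so $|X| \ge |Y \cup Z_2|$, forcing $Y \cup Z_2$ to be a d-set. When $|Z_1| < |Z_2|$, the first case would give $|W| \ge |Z_2| > |Z_1|$, contradicting the minimality of $X$ against the dominating set $Y \cup Z_1$; hence only the second case occurs, $|W| \ge |Z_1|$, and $Y \cup Z_1$ is a d-set. The supplementary claim $Z_1 \cap \bigcup_{y \in Y} N_G(y) \ne \emptyset$ in this regime is the mirror of the $W$-argument: if $Z_1$ were disjoint from $N_G(Y)$, then $Z_1 \subseteq V(G) \setminus N_G[Y]$ and the same-neighbourhood observation would make $Z_1$ a dominating set of $G''$, contradicting $|Z_1| < |Z_2|$. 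The only delicate point in the bookkeeping is keeping track of which auxiliary graph's edges are being used at each step; once one remembers that the modifications never touch vertices strictly outside $N_G(Y)$, everything reduces to the two-case split above.
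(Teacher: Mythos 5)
Your proof is correct and follows essentially the same route as the paper's: both anchor on the d-set $X = Y \cup W$ supplied by Proposition~\ref{P2} and split on whether $W$ meets $\bigcup_{y \in Y} N_G(y)$, relating $W$ to a dominating set of $G'$ in one case and of $G''$ in the other. Your write-up is in fact more careful than the paper's own proof (which declares the $Z_2$ direction ``obvious'' and slides between minimal and minimum dominating sets); the only minor imprecisions are that the clique on $\bigcup_{y \in Y} N_G(y)$ comes from the definition of $G'$ itself rather than from step~(2) of ${\bf K}$, and that a vertex outside $N_G[Y]$ retains its $G$-neighbourhood in $G''$ only after intersecting with $V(G'')$ --- both harmless, since every dominating vertex you invoke in those steps lies in $V(G'')$.
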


\begin{proof}
Obviously, $Y \cup Z_2$ is a d-set of $G$ if and only if $|Z_1| \geq |Z_2|$.
For a set $A \subseteq V(G)$, suppose that $Y \cup A$ is a d-set of $G$. Let $A' = A \setminus \bigcup_{y \in Y}N_G(y)$. By Remark \ref{R}, $A'$ is a d-set of $N_G[A']$. Suppose that $|Z_1| < |Z_2|$, then $A'$ is not a dominating set of $G''$, and $A \cap \bigcup_{y \in Y}N_G(y) \ne \emptyset$. Now, $A$ is a minimal dominating set of $G'$. By the definition of $G'$, $Y \cup Z_1$ is a minimal dominating set of $G$, and so it suffices that $A = Z_1$. Thus $Z_1 \cap \bigcup_{y \in Y}N_G(y) \ne \emptyset$. 
\end{proof}

Suppose that $G$ is cubic.

\begin{thm}
For some $X \in \mathcal{X}$, $X$ is determined in polynomial time.
\end{thm}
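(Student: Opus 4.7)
The plan is to give an explicit algorithm that, on input cubic $G$, outputs some $X \in \mathcal{X}$ in time polynomial in $|V(G)|$, exploiting the constructions already in hand.

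First, I would compute ${\bf K}(G)$ by implementing scheme ${\bf K}$. Each step (2)-(4) hinges on locating a cut vertex or an induced cycle of length $1$ or $2 \pmod 3$; both can be performed in polynomial time (block-cut tree computation, BFS-based short-cycle search). Since each iteration eliminates at least one offending substructure, the scheme halts after polynomially many rounds, and by Proposition \ref{P1} the output is a domination structure whose class $\mathcal{F}({\bf K}(G))$ is a singleton.

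Next, I would enumerate $\mathcal{Y}$. By Fact \ref{p1}, a labeling is determined by its first labeled vertex (using $d_G \geq 3$), giving at most $|V(G)|$ choices. Running the every-third-vertex labeling from each possible starting vertex produces $\mathcal{Y}$ in polynomial time, and by part (ii) of Fact \ref{p1} at least one $Y \in \mathcal{Y}$ is a d-set of ${\bf K}(G)$. Then for each $Y \in \mathcal{Y}$, I would apply Fact \ref{P4}: build $G'$ by deleting $Y$ and adding clique edges on $\bigcup_{y \in Y} N_G(y)$, build $G'' = G - N_G[Y]$, compute d-sets $Z_1$ of $G'$ and $Z_2$ of $G''$, and assemble the candidate $Y \cup Z$ according to the comparison between $|Z_1|$ and $|Z_2|$. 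Proposition \ref{P2} guarantees that at least one such candidate equals some $X \in \mathcal{X}$; taking the smallest candidate over $Y \in \mathcal{Y}$ returns the desired $X$.

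The main obstacle is bounding the cost of computing $Z_1$ and $Z_2$, which are themselves minimum-dominating-set problems on the auxiliary graphs. I expect the argument to proceed by recursion: because $Y$ dominates most of ${\bf K}(G)$ and $G$ is cubic, the instances $G'$ and $G''$ are strictly smaller than $G$ (indeed $|N_G[Y]|$ eats a positive fraction of $V(G)$), so a recursive invocation of the same algorithm has polynomial depth and polynomial cost per level. The subtle point will be verifying that the auxiliary graphs still fall within the hypotheses supporting scheme ${\bf K}$ and the labeling argument — specifically, that one can maintain or restore the minimum-degree condition $d \geq 3$ that underlies Fact \ref{p1}, Proposition \ref{P2}, and Fact \ref{P4}, so that these tools remain applicable at each recursive level. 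Closing this recursion is the crux of the proof; once it is in place, the three steps above compose into the claimed polynomial-time algorithm.
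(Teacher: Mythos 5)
Your skeleton matches the paper's: compute ${\bf K}(G)$, extract $Y$ via Fact \ref{p1} and Proposition \ref{P2}, split into the two auxiliary instances of Fact \ref{P4}, and recurse on the harder one. But you explicitly leave open the step that the paper's proof actually consists of, namely closing the recursion, and what you sketch in its place does not suffice. ``The instances are strictly smaller, so the recursion has polynomial depth and polynomial cost per level'' is not an argument: each level still contains a minimum-dominating-set subproblem ($Z_1$ on $G'$), and a polynomially deep chain of NP-hard subproblems is not a polynomial-time algorithm unless you show the subproblems eventually become tractable. The paper's key observation is that they do, and after a \emph{constant} number of levels: because $G$ is cubic, $G_2 = G - N_G[Y]$ has every component a path or cycle (so its d-set is found greedily); one level deeper, $G_4$ has only path components; one level deeper still, $G_6$ is an independent set, and at that point $Y_3$ already dominates $G_5$, so the $G'$-branch bottoms out. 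The recursion is not ``polynomially deep'' --- it is depth three, driven by the maximum degree dropping at each stage. That degree-reduction argument is the content of the proof and is absent from your proposal.

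The concern you raise about maintaining $d \geq 3$ on the auxiliary graphs $G_1, G_3, G_5$ is legitimate --- the paper invokes ${\bf K}(G_1)$ and Proposition \ref{P2} on graphs that are no longer cubic and need not have minimum degree $3$, and does not address this either --- so you have correctly identified a weak point, but identifying it is not the same as resolving it, and your proposal resolves neither this nor the termination question. Two smaller deviations: you propose iterating over all $Y \in \mathcal{Y}$ and taking the smallest candidate, whereas the paper fixes a single $Y$ satisfying Proposition \ref{P2}; and your appeal to Proposition \ref{P2} to certify that ``at least one candidate equals some $X$'' conflates Proposition \ref{P2} (which gives $Y \subseteq X$) with Fact \ref{P4} (which is what actually assembles $X$ from $Y$ and the subproblem solutions). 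Neither of these is fatal, but the missing termination argument is.
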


\begin{proof}
By Proposition \ref{P2}, $Y \subseteq X$ for some $X \in \mathcal{X}$. Let $G_0 = G$. Let $G_1$ be constructed by deleting $Y$, and for every pair $w_1, w_2 \in \bigcup_{y \in Y}N_G(y)$, adding an edge $w_1w_2$ to $G_0$. Let $G_2 = G_0 - N_{G_0}[Y]$. Let $W_1$ be a d-set of $G_1$ and $W_2$ be a d-set of $G_2$. Since $G_0$ is cubic and by the definition of $G_2$, each component of $G_2$ is path or cycle. Thus $W_2$ is determined in polynomial time. Suppose that $|W_1| < |W_2|$. Let $Y_1$ be a d-set of ${\bf K}(G_1)$ that satisfies Proposition \ref{P2}. By the definition of $Y_1$, it suffices that $Y_1 \cap \bigcup_{y \in Y}N_{G_0}(y) \ne \emptyset$. Let $G_3$ be constructed by deleting $Y_1$, and for every pair $w_1, w_2 \in \bigcup_{y \in Y_1}N_{G_1}(y)$, adding an edge $w_1w_2$ to $G_1$. Let $G_4 = G_1 - N_{G_1}[Y_1]$. Let $W_3$ be a d-set of $G_3$ and $W_4$ be a d-set of $G_4$. By the definition of $G_4$, each component of $G_4$ is path. Thus $W_4$ is determined in polynomial time. Suppose that $|W_3| < |W_4|$. Let $Y_2$ be a d-set of ${\bf K}(G_3)$ that satisfies Proposition \ref{P2}. By the definition of $Y_2$, it suffices that $Y_2 \cap \bigcup_{y \in Y_1}N_{G_1}(y) \ne \emptyset$. Let $G_5$ be constructed by deleting $Y_2$, and for every pair $w_1, w_2 \in \bigcup_{y \in Y_2}N_{G_3}(y)$, adding an edge $w_1w_2$ to $G_3$. Let $G_6 = G_3 - N_{G_3}[Y_2]$. Let $W_5$ be a d-set of $G_5$ and $W_6$ be a d-set of $G_6$. By the definition of $G_6$, $G_6$ is independent. Thus  $W_6 = V(G_6)$. Suppose that $|W_5| < |W_6|$. Let $Y_3$ be a d-set of ${\bf K}(G_5)$ that satisfies Proposition \ref{P2}. By the definition of $Y_3$, it suffices that $Y_3 \cap \bigcup_{y \in Y_2}N_{G_3}(y) \ne \emptyset$. Now, $Y_3$ is a dominating set of $G_5$ and so it suffices that $W_5 = Y_3$. By Fact \ref{P4}, if $|W_5| < |W_6|$, then it suffices that $W_3 = Y_2 \cup W_5$, otherwise, it suffices that $W_3 = Y_2 \cup W_6$. By Fact \ref{P4}, if $|W_3| < |W_4|$, then it suffices that $W_1 = Y_1 \cup W_3$, otherwise, it suffices that $W_1 = Y_1 \cup W_4$. By Fact \ref{P4}, if $|W_1| < |W_2|$, then it suffices that $X = Y \cup W_1$, otherwise, it suffices that $X = Y \cup W_2$. The proof is complete.
\end{proof}

\end{document}